\newtheorem{theorem}{Theorem}
\newtheorem{lemma}{Lemma}
\newtheorem{cor}{Corollary}
\theoremstyle{definition}
\newtheorem{claim}{Claim}
\newtheorem{case}{Case}
\begin{document}
\title
{\Large\bf Extreme tenacity of graphs with given order and
size\thanks{Supported by NSFC (No.~60642002, 10871158 and
10861009)}}

\author{
T.C.E. Cheng$^a$, Yinkui Li$^b$, Chuandong Xu$^c$ and Shenggui
Zhang$^{c,d}$\thanks{Corresponding author. E-mail address:
sgzhang@nwpu.edu.cn (S. Zhang).}\\
\small $^a$Department of Logistics and Maritime Studies, The Hong Kong Polytechnic University,\\
\small Hung Hom, Kowloon, Hong Kong\\[2mm]
\small $^b$Department of Mathematics, Qinghai Nationalities College,\\
\small Xining, Qinghai 810000, P.R.~China\\[2mm]
\small $^c$Department of Applied Mathematics, Northwestern
Polytechnical University,\\
\small Xi'an, Shaanxi 710072, P.R.~China\\[2mm]
\small $^d$State Key Laboratory for Manufacturing Systems
Engineering, Xi'an Jiaotong University,\\
\small Xi'an, Shaanxi, 710049, P.R.~China}
\date{June, 5, 2010}
\maketitle

\begin{abstract}
Computer or communication networks are so designed that they do not
easily get disrupted under external attack and, moreover, these are
easily reconstructible if they do get disrupted. These desirable
properties of networks can be measured by various graph parameters,
such as connectivity, toughness, scattering number, integrity,
tenacity, rupture degree and edge-analogues of some of them. Among
these parameters, the tenacity and rupture degree are two better
ones to measure the stability of a network. In this paper we
consider two extremal problems on the tenacity of graphs: Determine
the minimum and maximum tenacity of graphs with given order and
size. We give a complete solution to the first problem, while for
the second one, it turns out that the problem is much more
complicated than that of the minimum case. We determine the maximum
tenacity of trees and unicyclic graphs with given order and show the
corresponding extremal graphs. These results are helpful in
constructing stable networks with lower costs. The paper concludes
with a discussion of a related problem on the edge vulnerability
parameters of graphs.

\medskip
\noindent {\bf Keywords:}  vulnerability parameters; tenacity;
extreme values; trees; unicyclic graphs
\end{abstract}

\section{Introduction}

In an analysis of the vulnerability of networks to disruption, three
quantities (there may be others) that come to the mind are: (1) the
number of elements that are not functioning; (2) the number of
remaining connected subnetworks and (3) the size of a largest
remaining group within which mutual communication can still occur.
Based on these quantities, many graph parameters, such as
connectivity, toughness \cite{Chvatal}, scattering number
\cite{Jung}, integrity \cite{Barefoot_Entringer_Swart_0}, tenacity
\cite{Cozzen_Moazzami_Stueckle_0}, rupture degree \cite{Li_Zhang_Li}
and edge-analogues of some of them have been proposed for measuring
the vulnerability of networks.

Throughout the paper, we use Bondy and Murty \cite{Bondy_Murty} for
terminology and notation not defined here. For a graph $G$, by
$\omega(G)$ we denote the number of components of $G$, and $\tau(G)$
the order of a largest component of $G$. We shall use $\lfloor
x\rfloor$ for the largest integer not larger than $x$ and $\lceil
x\rceil$ the smallest integer not smaller than $x$.

The connectivity is a parameter defined based on Quantity (1). The
{\it connectivity} of a noncomplete graph $G$ is defined by
\begin{align*}
    \kappa (G)=\min\{|X|:X\subset V(G), \omega (G-X)>1\},
\end{align*}
and that of the complete graph $K_n$ is defined as $n-1$.

Both toughness and scattering number take into account Quantities
(1) and (2). The {\it toughness} and {\it scattering number} of a
noncomplete connected graph $G$ are defined by
\begin{align*}
t(G)=\min\{\frac{|X|}{\omega(G-X)}:X\subset V(G), \omega(G-X)>1\}
\end{align*}
and
\begin{align*}
s(G)=max\{\omega(G-X)-|X|:X\subset V(G),\omega(G-X)>1\},
\end{align*}
respectively.

The integrity  is defined based on Quantities (1) and (3). The {\it
integrity} of a graph $G$ is defined by
\begin{align*}
I(G)=\min\{|X|+\tau(G-X):X\subset V(G)\}.
\end{align*}

Both the tenacity and rupture degree  take into account all the
three quantities. The {\it tenacity} and {\it rupture degree} of a
noncomplete connected graph $G$ are defined by
\begin{align*}
T(G)=\min\{\frac{|X|+\tau(G-X)}{\omega(G-X)}:X\subset
V(G),\omega(G-X)>1\}
\end{align*}
and
$$
r(G)=\max\{\omega(G-X)-|X|-\tau(G-X)\}: X\subset V(G),
\omega(G-X)>1\},
$$
respectively.

From the above definitions, we can see that the connectivity of a
graph reflects the difficulty in breaking down a network into
several pieces. This invariant is often too weak, since it does not
take into account what remains after the corresponding graph is
disconnected. Unlike the connectivity, each of the other
vulnerability measures, i.e., toughness, scattering number,
integrity, tenacity and rupture degree, reflects not only the
difficulty in breaking down the network but also the damage that has
been caused. Further, we can easily see that the tenacity and
rupture degree are the two most advanced ones among these parameters
when measuring the stability of networks.

When designing stable networks, it is often required to know the
structure of networks attaining the maximum and minimum values of a
given stability parameters with prescribed number of communications
stations and links. This problem was first studied by the well-known
graph theorist Frank Harary \cite{Harary}. Harary states that among
all the graphs with $n$ vertices and $m$ edges, the maximum
connectivity is $0$ when $m<n-1$ and $\lfloor\frac{2m}{n}\rfloor$
when $m\geq n-1$. For two integers $n$ and $m$ with $n\leq m\leq
{n\choose 2}$, Harary constructed graphs with $n$ vertices, $m$
edges and connectivity $\lfloor\frac{2m}{n}\rfloor$, which are now
widely known as the Harary graphs. Harary \cite{Harary} also
considered the minimum connectivity of graphs with a given number of
vertices and edges. He showed that among all the graphs with $n$
vertices and $m$ edges, the minimum connectivity is $0$ or
$m-{n-1\choose 2}$, whichever is larger. This lower bound on the
connectivity can be achieved by any graph consisting of a complete
subgraph $K_{n-1}$, together with exactly one additional vertex that
is adjacent to any $m-{n-1\choose 2}$ vertices of $K_{n-1}$.

As connectivity, it is natural to ask what are the extreme values
for each of these new vulnerability parameters of a graph with a
given number of vertices and edges. This problem has been studied in
the literature for toughness, scattering number and integrity. We
list the results on the extreme values of these vulnerability
parameters of graphs in the following table:

\begin{center}
{Table 1: Achievements of the study for extreme values of vulnerability parameters}\\

\medskip
\begin{tabular}{|l|l|l|}
  \hline
  Vulnerability parameters & Maximum value & Minimum value \\
  \hline
  (Edge-)Connectivity & Complete solution \cite{Harary}   & Complete solution \cite{Harary} \\
  \hline
  Toughness & Partial solution  \cite{Doty,Doty_Ferland_0,Doty_Ferland_5,Ferland,Goddard_Swart} & Unknown \\
  \hline
  Scattering number & Complete solution  \cite{Ouyang} &  Complete solution  \cite{Ouyang}\\
  \hline
  Integrity & Partial solution \cite{Barefoot_Entringer_Swart_5} & Complete solution  \cite{Ma_Liu} \\
  \hline
\end{tabular}
\end{center}

\medskip
In this paper we consider the problem of determining the extreme
tenacity of a graph with a given number of vertices and edges. We
give a complete solution to the problem for the minimum case in
Section 2. The problem for the maximum case is much more
complicated. In Section 3 we give a partial solution to this problem
by determining the maximum tenacity of trees and unicyclic graphs
with given number of vertices and show the corresponding extremal
graphs. We conclude the paper with a discussion of a related problem
on the edge-vulnerability parameters of graphs in the final section.

\section{Minimum tenacity of graphs}

\begin{theorem}
Let $n$ and $m$ be two positive integers with $n-1\leq m\leq
{n\choose 2}-1$. Then among all the connected graphs with $n$
vertices and $m$ edges, the minimum tenacity is $\frac{k+1}{n-k}$,
where ${k \choose 2}+(n-k)(k-1)< m\leq{k \choose 2}+(n-k)k$.
\end{theorem}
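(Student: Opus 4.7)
The plan is to prove matching upper and lower bounds on the minimum tenacity. For the upper bound, write $a = m - \binom{k}{2} - (n-k)(k-1)$, so $1 \le a \le n-k$ by hypothesis, and construct $G$ by taking a clique $K_k$ on $A = \{a_1, \dots, a_k\}$, appending $n-k$ extra vertices $b_1, \dots, b_{n-k}$ each joined to $a_1, \dots, a_{k-1}$, and then adding the further edges $b_1 a_k, \dots, b_a a_k$. This graph is connected (via $a_1$), has exactly $m$ edges, and cutting $X = A$ leaves $n-k$ isolated vertices, yielding $T(G) \le \frac{k+1}{n-k}$.

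For the lower bound, I assume toward a contradiction that some connected $G$ on $n$ vertices with $m > \binom{k}{2} + (n-k)(k-1)$ edges achieves $T(G) < \frac{k+1}{n-k}$. Choose a minimizer $X^*$ and set $x = |X^*|$, $\omega = \omega(G - X^*)$, $\tau = \tau(G - X^*)$, so that $(x+\tau)(n-k) < (k+1)\omega$. The structural observation I would use is that the largest component of $G - X^*$ contains $\tau$ vertices while each of the other $\omega - 1$ components contains at least one, forcing $\omega \le n - x - \tau + 1$. Plugging this into the tenacity inequality and writing $s = x + \tau$, a short manipulation yields $s(n+1) < (k+1)(n+1)$, so $x + \tau \le k$ and in particular $x \le k-1$.

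It remains to convert $x + \tau \le k$ into an edge count. Decompose $E(G)$ into three parts: at most $\binom{x}{2}$ edges inside $X^*$, at most $x(n-x)$ edges between $X^*$ and its complement, and at most $\sum_i \binom{|C_i|}{2} \le \frac{(n-x)(\tau - 1)}{2}$ edges within the components $C_i$ of $G - X^*$ (using $|C_i| \le \tau$ and $\sum |C_i| = n-x$). Substituting $\tau \le k - x$ and simplifying, the bound reduces to $\frac{x(n-k) + n(k-1)}{2}$, which is increasing in $x$ and equals $\binom{k}{2} + (n-k)(k-1)$ at $x = k-1$; hence $m \le \binom{k}{2} + (n-k)(k-1)$, contradicting the hypothesis on $m$. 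I expect the main obstacle to be the coupling step in the second paragraph --- combining the tenacity inequality with the structural bound $\omega \le n - x - \tau + 1$ to extract $x + \tau \le k$ --- since this is the one piece of algebra that converts information about the minimizer of a ratio into a hard arithmetic ceiling on $x + \tau$, after which the edge count is essentially routine.
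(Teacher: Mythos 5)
Your proof is correct, but it takes a genuinely different route from the paper's. The paper argues structurally about an \emph{extremal} graph: it picks a minimizing graph $G$ and a largest minimizing cut $X^*$, and uses exchange arguments (rerouting edges when $G-X^*$ has two or more nontrivial components, enlarging the cut when it has exactly one) to force $\tau(G-X^*)=1$; only then does it count edges, getting $m\le\binom{x}{2}+(n-x)x$ and hence $x\ge k$. Your argument instead proves the lower bound for \emph{every} connected graph and \emph{every} vertex cut directly: the pigeonhole bound $\omega\le n-x-\tau+1$ feeds into the assumed inequality $(x+\tau)(n-k)<(k+1)\omega$ to give $(x+\tau)(n+1)<(k+1)(n+1)$, hence $x+\tau\le k$, and the three-part edge decomposition $m\le\binom{x}{2}+x(n-x)+\frac{(n-x)(\tau-1)}{2}\le\frac{x(n-k)+n(k-1)}{2}\le\binom{k}{2}+(n-k)(k-1)$ closes the contradiction (I checked the algebra; it is right, and the monotonicity in $x$ and $\tau$ that justifies substituting $x=k-1$ and $\tau=k-x$ holds since $n-k>0$ and $n-x>0$). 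What your approach buys is a shorter, more robust argument with no graph surgery and no appeal to the choice of a maximal minimizing cut; what the paper's approach buys is the structural description of the extremal graphs recorded in its Corollary 1 (a clique $K_k$ plus $n-k$ pendant-like vertices), which your global counting argument does not directly yield, though your explicit construction for the upper bound exhibits exactly such a graph.
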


\begin{proof}
{Suppose that $G$ is a connected graph with $n$ vertices and $m$
edges such that its tenacity is minimum. Let $X^*$ be a vertex cut
of $G$ with
$$
T(G)=\frac{|X^*|+\tau(G-X^*)}{\omega(G-X^*)}.
$$
We assume that $X^*$ is chosen such that $|X^*|$ is as large as
possible. Denote the components of $G-X^*$ with at least two
vertices by $G_1,G_2,\ldots,G_p$.}

{Suppose $p\geq 2$.} Choose a vertex $u_{i}$ in $G_{i}$  such that
$u_i$ is adjacent to at least one vertex of $X^*$ for each $i$ with
$1\leq i\leq p$. Replace each edge $u_iv$ in $G_i$ by a new edge
$u_pv$ for every $i$ with $1\leq i\leq p-1$. Denote the resulting
graph by $G'$. Then $G'$ is also a connected graph with $n$ vertices
and $m$ edges, and $X^{'}=X^{\ast}\cup\{u_{p}\}$ is a vertex cut of
$G^{'}$ with
$$
\omega(G^{'}-X^{'})\geq \omega(G-X^{\ast})+(p-1)\geq
\omega(G-X^{\ast})+1
$$
and
$$
\tau(G^{'}-X^{'})\leq \tau(G-X^{\ast})-1.
$$
This implies that
\begin{align*}
T(G^{'})&\leq \frac{|X^{'}|+\tau(G^{'}-X^{'})}{\omega(G^{'}-X^{'})}
        \leq\frac{|X^{\ast}|+1+\tau(G-X^{\ast})-1}{\omega(G-X^{\ast})+1}\\
        &<
        \frac{|X^{\ast}|+\tau(G-X^{\ast})}{\omega(G-X^{\ast})}=T(G),
\end{align*}
{a contradiction.}

{Suppose now $p=1$.} We distinguish two cases.

\begin{case}
{$V(G_{1})$ is a clique.}
\end{case}

Let $u_1$ be a vertex in $G_1$. Set $X^{**}=X^{\ast}\cup
(V(G_{1})\setminus\{u_{1}\})$. Clearly we have
$$
\mbox{$\tau(G-X^{**})=1$ and $\omega(G-X^{**})=\omega(G-X^{\ast})$.}
$$
Then $X^{**}$ is a vertex cut of $G$ with
$$
\mbox{$|X^{**}|=|X^{\ast}|+\tau(G-X^{\ast})-1$}
$$
and
$$
\frac{|X^{**}|+\tau(G-X^{**})}{\omega(G-X^{**})}
=\frac{|X^{\ast}|+\tau(G-X^{\ast})}{\omega(G-X^{\ast})}=T(G),
$$
{contradicting the choice of $X^{\ast}$.}

\begin{case}
{$V(G_{1})$ is a not a clique.}
\end{case}

In this case, let $X_1$ be a vertex cut of $G_{1}$. Set
$X^{**}=X^{\ast}\cup X_{1}$. Then $X^{**}$ is a vertex cut of $G$
with
$$
\mbox{$|X^{**}|=|X^{\ast}|+|X_{1}|$, $\tau(G-X^{**})\leq
\tau(G-X^{\ast})-|X_{1}|-1$},
$$
and
$$
\omega(G-X^{**})\geq \omega(G-X^{\ast})+1.
$$
This implies that
\begin{align*}
\frac{|X^{**}|+\tau(G-X^{**})}{\omega(G-X^{**})}
        &\leq\frac{|X^{\ast}|+|X_{1}|+\tau(G-X^{\ast})-|X_{1}|-1}{\omega(G-X^{\ast})+1}\\
        &<
        \frac{|X^{\ast}|+\tau(G-X^{\ast})}{\omega(G-X^{\ast})}=T(G),
\end{align*}
{contradicting the definition of the tenacity of $G$.}

{From the above discussion, we have $p=0$, i.e., $\tau(G-X^*)=1$.}

Now let $|X^*|=x$. Then
\begin{align*}
T(G)=\frac{|X^{\ast}|+\tau(G-X^{\ast})}{\omega(G-X^{\ast})}=\frac{x+1}{n-x}.
\end{align*}

We claim that $x\geq k$. Otherwise,
\begin{align*}
 m&=|E(G)|\\
  &\leq{x\choose 2}+(n-x)x\\
        &\leq{k-1\choose 2}+(n-k+1)(k-1)\\
        &={k\choose 2}+(n-k)(k-1),
\end{align*}
a contradiction.

Therefore, we have
$$
T(G)=\frac{x+1}{n-x}\geq \frac{k+1}{n-k}.
$$
On the other hand, since ${k \choose 2}+(n-k)(k-1)< m\leq{k \choose
2}+(n-k)k$, it is easy to construct a connected graph with $n$
vertices and $m$ edges such that its tenacity is
$$
\frac{k+1}{n-k},
$$
which completes the proof of the theorem.
\end{proof}

From the proof of Theorem 1, we deduce the following result:

\begin{cor}
Let $n$ and $m$ be two positive integers with $n-1\leq m\leq
{n\choose 2}-1$, and $G$ be a connected graph with $n$ vertices and
$m$ edges such that its tenacity is minimum. Then $G$ consists of a
complete subgraph $K_k$ together with $n-k$ additional vertices
incident to $m-{k\choose 2}$ edges with the other end vertices in
$K_k$, where ${k \choose 2}+(n-k)(k-1)< m\leq{k \choose 2}+(n-k)k$.
\end{cor}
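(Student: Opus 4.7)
The plan is to read the corollary off directly from the proof of Theorem~1. Let $G$ be a connected graph on $n$ vertices and $m$ edges realizing the minimum tenacity, and let $X^{\ast}$ be an optimal vertex cut of $G$ chosen with $|X^{\ast}|$ as large as possible. The proof of Theorem~1 already establishes two facts I will use: every component of $G - X^{\ast}$ is a singleton, so $\tau(G - X^{\ast}) = 1$; and $|X^{\ast}| \geq k$, where $k$ is determined by $\binom{k}{2} + (n-k)(k-1) < m \leq \binom{k}{2} + (n-k)k$.

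I would first pin down $|X^{\ast}| = k$ exactly. Since $T(G) = (|X^{\ast}|+1)/(n-|X^{\ast}|)$ and the map $x \mapsto (x+1)/(n-x)$ is strictly increasing on $0 \leq x < n$, the equality $T(G) = (k+1)/(n-k)$ forces $|X^{\ast}| = k$. Consequently $V(G) \setminus X^{\ast}$ is an independent set of $n - k$ vertices, every edge of $G$ has at least one endpoint in $X^{\ast}$, and $m = |E(G[X^{\ast}])| + |E(X^{\ast}, V(G) \setminus X^{\ast})|$.

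It then remains to show $G[X^{\ast}] = K_k$, after which the remaining $m - \binom{k}{2}$ edges must run between $X^{\ast}$ and $V(G) \setminus X^{\ast}$, yielding the advertised decomposition. The natural route is an exchange argument: the strict inequality $m > \binom{k}{2} + (n-k)(k-1)$, combined with the trivial bound $|E(X^{\ast}, V(G) \setminus X^{\ast})| \leq k(n-k)$, already forces $|E(G[X^{\ast}])| \geq \binom{k}{2} - (n-k) + 1$. If a non-edge $uv$ were still present inside $X^{\ast}$, I would try to use the maximality of $|X^{\ast}|$ together with the strict-monotonicity calculations of Cases~1 and~2 in the proof of Theorem~1 to either enlarge $X^{\ast}$ within the class of optimal cuts or produce a cut of strictly smaller ratio, contradicting the choice of $G$ and $X^{\ast}$.

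The hard part is precisely this last step. The reductions in Theorem~1 were tailored to collapsing components of $G - X^{\ast}$ into singletons and do not directly see the internal geometry of $X^{\ast}$, so the exchange has to be designed from scratch; the edge-count bound above is suggestive but not on its own sufficient, and the final argument will likely need to leverage both the strict inequality on $m$ and the maximality of $|X^{\ast}|$ simultaneously.
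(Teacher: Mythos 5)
Your reduction to the two facts from Theorem~1 is fine, and the monotonicity argument pinning down $|X^{\ast}|=k$ is correct. But the step you flag as ``the hard part'' --- showing that $G[X^{\ast}]$ is complete --- is a genuine gap, and it is not one you (or anyone) can close, because the statement is false as a characterization. Take $n=m=4$, so $k=2$ and the minimum tenacity is $\frac{3}{2}$. The cycle $C_4$ attains $T(C_4)=\frac{6}{4}=\frac{3}{2}$ (Lemma~4 of the paper), hence is a minimum-tenacity graph, yet it does not consist of a $K_2$ together with two independent vertices all of whose edges go to that $K_2$: for every edge of $C_4$ the two remaining vertices are adjacent to each other. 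In your notation, the optimal cut $X^{\ast}$ of $C_4$ is a pair of opposite (nonadjacent) vertices, so $G[X^{\ast}]$ has no edges at all; your lower bound $|E(G[X^{\ast}])|\geq \binom{k}{2}-(n-k)+1=0$ is consistent with this and shows exactly why the edge count alone cannot force completeness. (A similar counterexample with $n=5$, $m=5$ is $C_4$ with a pendant vertex: the cut $\{1,3\}$ of the $C_4$ gives ratio $1$, the minimum, but no edge of the graph has the other three vertices independent.)

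For what it is worth, the paper offers no proof of the corollary either; it simply asserts that it follows ``from the proof of Theorem~1.'' What that proof actually yields is precisely what you extracted: some optimal cut $X^{\ast}$ with $|X^{\ast}|=k$ and $G-X^{\ast}$ edgeless. It says nothing about the internal structure of $X^{\ast}$, so the corollary as stated is an overclaim; at best one can assert that \emph{some} minimum-tenacity graph has the advertised form, which is the existence construction at the end of Theorem~1's proof. Your instinct that the exchange argument ``has to be designed from scratch'' was right --- it cannot be designed at all, and the honest thing to do is to record the counterexample rather than keep searching for the exchange.
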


\section{Maximum tenacity of graphs}

In this section we consider the maximum tenacity of connected graphs
with given number of vertices and edges. It turns out that this
problem is much more complicated than that of the minimum case. Here
we give the results for the problem involving trees and unicyclic
graphs.

In the following by an odd (or even) path we mean a path with an odd
(or even) number of vertices. For a unicyclic graph $G$, we use
$C_G$ to denote the unique cycle in $G$ and by $U_G$ to denote the
set of vertices on $C_G$ with degree at least 3.

We first list some lemmas.

\begin{lemma}[Cozzen, Moazzami and Stueckle \cite{Cozzen_Moazzami_Stueckle_0}]
Let $G$ be a noncomplete connected graph and $H$ be a connected
spanning subgraph of $G$. Then $T(H)\leq T(G)$.
\end{lemma}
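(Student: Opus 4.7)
The plan is to use the vertex cut of $G$ that witnesses $T(G)$ as a candidate vertex cut of $H$, exploiting the fact that deleting edges can only help the attacker. First I would fix $X^{*}\subseteq V(G)$ with $\omega(G-X^{*})\geq 2$ realizing
\[
T(G)=\frac{|X^{*}|+\tau(G-X^{*})}{\omega(G-X^{*})}.
\]
Because $H$ is spanning, $V(H)=V(G)$, so $X^{*}\subseteq V(H)$ and $H-X^{*}$ is a spanning subgraph of $G-X^{*}$. Removing edges from a graph can only split components apart (never merge them) and can only shrink the largest component, so I would deduce
\[
\omega(H-X^{*})\geq \omega(G-X^{*})\geq 2\qquad\text{and}\qquad \tau(H-X^{*})\leq \tau(G-X^{*}).
\]
In particular $X^{*}$ is a vertex cut of $H$. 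Since $G$ is noncomplete and $H$ is a subgraph of $G$, the graph $H$ is also noncomplete, so the minimization formula for $T(H)$ is applicable and yields
\[
T(H)\ \leq\ \frac{|X^{*}|+\tau(H-X^{*})}{\omega(H-X^{*})}.
\]

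The remaining step is a purely arithmetic comparison of this ratio with the ratio defining $T(G)$. Writing $a=|X^{*}|\geq 0$, $b_{1}=\tau(H-X^{*})\leq b_{2}=\tau(G-X^{*})$ and $c_{1}=\omega(H-X^{*})\geq c_{2}=\omega(G-X^{*})\geq 2$, the claim becomes
\[
\frac{a+b_{1}}{c_{1}}\ \leq\ \frac{a+b_{2}}{c_{2}}.
\]
Clearing denominators, this reduces to $a c_{2}+b_{1}c_{2}\leq a c_{1}+b_{2}c_{1}$, which holds termwise because $c_{1}\geq c_{2}\geq 0$ and $b_{2}\geq b_{1}\geq 0$. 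Chaining the two bounds then gives $T(H)\leq T(G)$.

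There is no genuine obstacle in this argument; the only point requiring care is checking that the fractional inequality runs in the favourable direction when the numerator weakly decreases and the denominator weakly increases, which is where the nonnegativity of all quantities is used. Conceptually the lemma just records the fact that every separating set of $G$ is automatically a separating set of $H$ that performs at least as well for the attacker, so thinning the edges of a spanning connected subgraph cannot improve tenacity.
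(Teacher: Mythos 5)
Your argument is correct: taking a set $X^{*}$ that realizes $T(G)$, observing that $H-X^{*}$ is a spanning subgraph of $G-X^{*}$ so that $\omega(H-X^{*})\geq\omega(G-X^{*})\geq 2$ and $\tau(H-X^{*})\leq\tau(G-X^{*})$, and then verifying the fraction inequality by cross-multiplication is exactly the standard proof of this fact. The paper itself gives no proof, citing Cozzen, Moazzami and Stueckle instead, so there is nothing to compare against; your write-up fills that gap correctly, including the small but necessary checks that $X^{*}$ remains a vertex cut of $H$ and that $H$ is noncomplete.
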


\begin{lemma}
Let $G$ be a connected graph. If there exists a vertex cut $X_0$ of
$G$ such that $G-X_0$ is a forest and $\omega(G-X_0)\geq |X_0|+2$
(resp. $\omega(G-X_0)\geq |X_0|+3$), then $T(G)\leq 1$ (resp. $T(G)<
1$).
\end{lemma}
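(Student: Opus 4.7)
The plan is to prove both statements simultaneously by induction on $\tau(G-X_0)$, so we may work with a family of vertex cuts that always satisfy the hypothesis but with strictly decreasing $\tau$.

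For the base case $\tau(G-X_0)\le 2$, the bound is immediate from the definition of tenacity:
\[
T(G)\le\frac{|X_0|+\tau(G-X_0)}{\omega(G-X_0)}\le\frac{|X_0|+2}{\omega(G-X_0)},
\]
which is at most $1$ when $\omega(G-X_0)\ge|X_0|+2$, and strictly less than $1$ when $\omega(G-X_0)\ge|X_0|+3$, since in that case the fraction is at most $(|X_0|+2)/(|X_0|+3)$.

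For the inductive step suppose $\tau(G-X_0)\ge 3$. The idea is to enlarge $X_0$ to a new vertex cut $X_1$ that still satisfies the hypothesis but has $\tau(G-X_1)<\tau(G-X_0)$, so that the induction hypothesis applies. Concretely, let $T^{(1)},\dots,T^{(r)}$ be the components of $G-X_0$ of maximum order $\tau(G-X_0)\ge 3$. Each tree $T^{(j)}$ has at least three vertices and hence a non-leaf vertex $v^{(j)}$ (degree at least $2$ in $T^{(j)}$); set $X_1=X_0\cup\{v^{(1)},\dots,v^{(r)}\}$. Then $G-X_1$, being a subforest of $G-X_0$, is a forest; deleting the non-leaf $v^{(j)}$ from $T^{(j)}$ splits it into at least two subtrees, so
\[
\omega(G-X_1)\ge\omega(G-X_0)-r+2r=\omega(G-X_0)+r\ge|X_0|+2+r=|X_1|+2
\]
(respectively $|X_1|+3$ in the stronger version); in particular $X_1$ is a vertex cut. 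Moreover, every subtree of $T^{(j)}-v^{(j)}$ has fewer than $\tau(G-X_0)$ vertices and the other components of $G-X_0$ were already strictly smaller than $\tau(G-X_0)$, so $\tau(G-X_1)\le\tau(G-X_0)-1$. Applying the induction hypothesis to $X_1$ delivers the desired inequality.

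The main point to check, and really the only non-bookkeeping step, is the preservation of the excess-components inequality $\omega(G-X)-|X|\ge 2$ (resp.\ $\ge 3$) at each inductive step: each of the $r$ newly added cut vertices must produce at least one extra component, which is what forces the choice of \emph{non-leaves} in the largest trees. Once this invariant is carried through, everything else follows from counting vertices and the definition of $T(G)$.
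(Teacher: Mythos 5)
Your proof is correct and is essentially the paper's argument in inductive dress: the paper iteratively deletes vertices of degree at least $2$ from the forest one at a time, preserving the same invariant $\omega(G-X)-|X|\ge 2$ (resp.\ $\ge 3$) until the maximum degree drops to at most $1$ (equivalently, until $\tau\le 2$), and then makes the identical final computation. The key observation in both cases is that removing a non-leaf of a forest raises the component count by at least as much as the cut size, so no substantive difference.
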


\begin{proof}
If the maximum degree of $G-X_0$ is at most 1, then let $X^*=X_0$.
Otherwise, choose a vertex $v_1\in V(G-X_0)$ with
$d_{G-X_0}(v_1)\geq 2$ and set $X_1=X_0\cup \{v_1\}$. Then we have
$\omega(G-X_1)\geq |X_1|+2$. Repeating this process, we can finally
obtain a vertex cut $X_k$ of $G$ with $\omega(G-X_k)\geq |X_k|+2$
and the maximum degree of $G-X_k$ is at most 1. Choose $X_k$ as
$X^*$. Then we have
$$
\frac{|X^*|+\tau(G-X^*)}{\omega(G-X^*)}\leq
\frac{\omega(G-X^*)-2+2}{\omega(G-X^*)}=1,
$$
which implies that $T(G)\leq 1$ by the definition of tenacity. The
other assertion can be proved similarly.
\end{proof}

\begin{lemma}[Choudum and Priya \cite{Choudum_Priya}]
The tenacity of the path $P_n$ is
$$
T(P_n)=\left\{
         \begin{array}{ll}
           1, & \hbox{if $n$ is odd;} \\
           \frac{n+2}{n}, & \hbox{if $n$ is even.}
         \end{array}
       \right.
$$
\end{lemma}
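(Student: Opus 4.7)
The plan is to prove matching upper and lower bounds on $T(P_n)$. Label $V(P_n)=\{v_1,\ldots,v_n\}$ with $v_iv_{i+1}\in E(P_n)$.

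For the upper bound I would exhibit two explicit vertex cuts. If $n=2k+1$ is odd, take $X=\{v_2,v_4,\ldots,v_{2k}\}$; then $P_n-X$ consists of $k+1$ isolated vertices, giving ratio $(k+1)/(k+1)=1$. If $n=2k$ is even, take $X=\{v_2,v_4,\ldots,v_{2k-2}\}$; then $P_n-X$ is the disjoint union of $k-1$ isolated vertices and the single edge $v_{2k-1}v_{2k}$, giving ratio $((k-1)+2)/k=(k+1)/k=(n+2)/n$.

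For the lower bound, fix an arbitrary vertex cut $X\subset V(P_n)$ with $\omega(P_n-X)\geq 2$ and set $x=|X|$, $\omega=\omega(P_n-X)$, $\tau=\tau(P_n-X)$. Two structural facts drive the argument: (a) removing $x$ vertices from a path creates at most $x+1$ components, so $\omega\leq x+1$; and (b) the component sizes $n_1,\ldots,n_\omega$ satisfy $\sum n_i=n-x$ with each $n_i\geq 1$. Since $\tau\geq 1$, fact (a) immediately yields $(x+\tau)/\omega\geq(x+1)/(x+1)=1$, matching the construction in the odd case.

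In the even case $n=2k$ I would split on whether $\tau=1$ or $\tau\geq 2$. If $\tau=1$, every component is a singleton, so $\omega=n-x$; combined with $\omega\leq x+1$ this forces $x\geq k$, and the ratio $(x+1)/(n-x)$ is then increasing in $x$ with minimum $(k+1)/k$ at $x=k$. If $\tau\geq 2$, then (b) strengthens to $n-x\geq\omega+1$, so $\omega\leq\min(x+1,n-x-1)$; a short sub-case split according to whether $x\leq k-1$ or $x\geq k$ shows that the ratio is at least $(k+1)/k$ in both ranges. The main technical obstacle is that the naive continuous bound $\tau\geq(n-x)/\omega$ does not suffice to recover $(n+2)/n$; one genuinely has to exploit the integrality of $\tau$, and in the sub-case $\tau\geq 2$ apply the improved inequality $\omega\leq n-x-1$ coming from the existence of a component of size at least $2$.
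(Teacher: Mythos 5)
The paper offers no proof of this lemma to compare against: it is imported verbatim as a known result of Choudum and Priya \cite{Choudum_Priya}. Your argument is a correct, self-contained replacement. The two cuts in the upper bound are right ($k$ alternate internal vertices when $n=2k+1$, and $k-1$ of them when $n=2k$, leaving one surviving edge), and the lower bound correctly reduces everything about a path to the two facts $\omega\le |X|+1$ and $\sum n_i=n-|X|$. The chain $x+\tau\ge x+1\ge\omega$ settles the odd case (and shows $T(P_n)\ge 1$ in general). In the even case your split on $\tau$ checks out: if $\tau=1$ then $\omega=n-x\le x+1$ forces $x\ge k$ by integrality and $(x+1)/(2k-x)$ is increasing in $x$, with minimum $(k+1)/k$ at $x=k$; if $\tau\ge 2$ then $(x+2)/(x+1)\ge(k+1)/k$ handles $x\le k-1$ via $\omega\le x+1$, while $\omega\le n-x-1$ gives $(x+2)/(2k-x-1)\ge(k+2)/(k-1)>(k+1)/k$ for $x\ge k$. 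Your closing observation is also accurate: the continuous bound $\tau\ge(n-x)/\omega$ alone cannot recover $(n+2)/n$, so the integrality of the component sizes is genuinely doing work. The only loose end is the degenerate range $n\le 2$, where $P_n$ is complete and the paper's tenacity formula does not apply; this is harmless but worth a sentence.
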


\begin{lemma}[Cozzen, Moazzami and Stueckle \cite{Cozzen_Moazzami_Stueckle_5}]
The tenacity of the cycle $C_n$ is
$$
T(C_n)=\left\{
         \begin{array}{ll}
           \frac{n+3}{n-1}, & \hbox{if $n$ is odd;} \\
           \frac{n+2}{n}, & \hbox{if $n$ is even.}
         \end{array}
       \right.
$$
\end{lemma}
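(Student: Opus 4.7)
The plan is to bound $T(C_n)$ from above by exhibiting an explicit vertex cut and from below by a short inequality argument. Throughout, let $X \subset V(C_n)$ be a vertex cut with $\omega(C_n - X) \geq 2$, and write $k = |X|$, $\omega = \omega(C_n - X)$, $\tau = \tau(C_n - X)$. Because $C_n$ is a cycle, $C_n - X$ is a disjoint union of paths, one per maximal run of $C_n$-vertices between consecutive elements of $X$. This immediately gives the structural constraints $\omega \leq k$, $\omega \leq n - k$ (so $\omega \leq \lfloor n/2 \rfloor$), and $\omega \tau \geq n - k$.

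For the upper bound I would label the vertices cyclically as $v_1, v_2, \ldots, v_n$. If $n$ is even, take $X^{*} = \{v_2, v_4, \ldots, v_n\}$; then $C_n - X^{*}$ consists of $n/2$ isolated vertices, giving
$$T(C_n) \leq \frac{|X^{*}| + \tau(C_n - X^{*})}{\omega(C_n - X^{*})} = \frac{n/2 + 1}{n/2} = \frac{n+2}{n}.$$
If $n = 2\ell + 1$ is odd, take $X^{*} = \{v_2, v_4, \ldots, v_{2\ell}\}$; then $C_n - X^{*}$ consists of the single path $v_n v_1$ of order $2$ together with $\ell - 1$ isolated vertices, giving $(|X^{*}| + \tau)/\omega = (\ell + 2)/\ell = (n+3)/(n-1)$.

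For the lower bound I would clear denominators and verify the resulting integer inequality using the structural constraints. In the even case, the desired inequality $(k + \tau)/\omega \geq (n+2)/n$ is equivalent to
$$n(k - \omega) + n\tau - 2\omega \geq 0,$$
which holds because $k \geq \omega$, $\tau \geq 1$, and $2\omega \leq n$. In the odd case, the claim $(k + \tau)/\omega \geq (n+3)/(n-1)$ becomes
$$(n-1)(k - \omega) + (n-1)\tau - 4\omega \geq 0.$$
If $\tau \geq 2$, then $(n-1)\tau \geq 2(n-1) \geq 4\omega$ since $\omega \leq (n-1)/2$. If $\tau = 1$, then every component of $C_n - X$ is a singleton, so $\omega = n - k$, and combined with $\omega \leq k$ this forces $k \geq (n+1)/2$; substituting into the inequality reduces it to a direct arithmetic check that is tight exactly at $k = (n+1)/2$.

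The main obstacle is the odd case: unlike in the even case, one cannot partition $C_n$ into $\lfloor n/2 \rfloor$ singletons by deleting an independent set, so the extremal configuration is necessarily unbalanced (one component of order $2$). Consequently the continuous relaxation $\tau \geq (n-k)/\omega$ is not tight when $\omega \nmid (n - k)$, and the lower-bound argument must split into the cases $\tau = 1$ and $\tau \geq 2$ to exploit the integrality of $\tau$ together with $\omega \leq (n-1)/2$.
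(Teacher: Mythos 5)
The paper does not prove this lemma at all: it is imported verbatim as Lemma~4 from Cozzen, Moazzami and Stueckle \cite{Cozzen_Moazzami_Stueckle_5}, so there is no in-paper argument to compare yours against. Judged on its own, your proof is correct and self-contained. The upper-bound cuts are right: for even $n$ the alternating set leaves $n/2$ singletons, and for odd $n=2\ell+1$ the set $\{v_2,\dots,v_{2\ell}\}$ leaves one $P_2$ and $\ell-1$ singletons, giving exactly $\frac{n+2}{n}$ and $\frac{n+3}{n-1}$. The structural facts $\omega\le k$, $\omega\le n-k$ (hence $2\omega\le n$) are the correct ones for a cycle, and the even-case inequality $n(k-\omega)+n\tau-2\omega\ge 0$ follows immediately from them. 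The odd case is where the real content lies, and your split on $\tau$ is the right move: for $\tau\ge 2$ you use $4\omega\le 2(n-1)\le(n-1)\tau$, and for $\tau=1$ the remaining vertices form an independent set, so $\omega=n-k\le k$ forces $k\ge\frac{n+1}{2}$ by parity, whence $\frac{k+1}{n-k}\ge\frac{n+3}{n-1}$ since the left side is increasing in $k$ and equals the right side at $k=\frac{n+1}{2}$. I would only ask you to write out that last monotonicity step explicitly rather than calling it ``a direct arithmetic check,'' and to note that the statement implicitly assumes $n\ge 4$ (for $n=3$ the cycle is complete and tenacity is not defined by the formula used here). Your argument would serve perfectly well as a proof of Lemma~4 if the authors had chosen to include one.
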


\begin{theorem}
Among all the trees on $n$ vertices,  $G$ has the maximum
tenacity if and only if\\
$(i)$ $G$ is a path when $n$ is even;
\\
$(ii)$ the maximum degree of $G$ is at most 3 and it contains no
nonadjacent vertices of degree 3 when $n$ is odd.
\end{theorem}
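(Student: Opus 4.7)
The plan is to combine the tree identity for $\omega(G-X)$ with Lemma~2 (to get the upper bound $T(G)\leq 1$ for trees with a vertex of degree~$\geq 3$), and a parity argument (to get the matching lower bound $T(G)\geq 1$ for the admissible trees).

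\emph{Step~1 (tree identity).} For any tree $T$ and any $X\subseteq V(T)$, since $T-X$ is a forest,
\begin{equation*}
\omega(T-X)=1+\sum_{u\in X}(\deg_T(u)-1)-e_T(X),
\end{equation*}
where $e_T(X)$ denotes the number of edges spanned by $X$. Consequently: (a)~a single vertex of degree~$\geq 3$ gives a cut with $\omega\geq |X|+2$, so Lemma~2 yields $T(T)\leq 1$; (b)~a vertex of degree~$\geq 4$, or two nonadjacent vertices of degree~$3$, gives $\omega\geq |X|+3$, hence $T(T)<1$.

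\emph{Step~2 ($n$ even).} By Lemma~3, $T(P_n)=(n+2)/n>1$. Any tree $G\neq P_n$ on $n$ vertices has a vertex of degree~$\geq 3$ and so $T(G)\leq 1<T(P_n)$ by~(a). This proves~(i).

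\emph{Step~3 ($n$ odd, only-if).} By Lemma~3, $T(P_n)=1$. Any tree violating~(ii) contains a vertex of degree~$\geq 4$ or two nonadjacent vertices of degree~$3$, so by~(b) its tenacity is $<1$; such a tree is not extremal.

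\emph{Step~4 ($n$ odd, if).} Let $G$ satisfy the conditions in~(ii). The set of degree-$3$ vertices is a clique in the tree $G$, hence has size $\leq 2$, so $G$ is either the path $P_n$, a spider (one center of degree~$3$ with three path arms of lengths $l_1,l_2,l_3$ summing to $n-1$), or a double-spider (two adjacent centers of degree~$3$ with four path arms of lengths summing to $n-2$). Removing the $1$ or $2$ centers gives a cut with $\omega=|X|+2$, so $T(G)\leq 1$ by Lemma~2. For $T(G)\geq 1$, verify $|X|+\tau(G-X)\geq\omega(G-X)$ for every vertex cut $X$ by applying the path-removal identity
\begin{equation*}
\omega_i=x_i+1-\epsilon_i-\eta_i-c_i
\end{equation*}
on each arm $A_i$, with $x_i=|X\cap V(A_i)|$, $\epsilon_i,\eta_i\in\{0,1\}$ indicating whether the two endpoints of $A_i$ are in $X$, and $c_i$ the number of adjacent $X$-pairs inside $A_i$. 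A case analysis on which of the centers lie in $X$ makes the required inequality trivial except in one critical sub-case: $X$ contains the center(s), no leaves, and $e_T(X)$ is as small as possible ($0$, or $1$ when both centers are in $X$). In that sub-case one computes $\omega(G-X)=|X|+2$; then $\tau=1$ would impose a parity on each arm length $l_i$ (odd if the near endpoint of $A_i$ lies outside $X$, even if inside). Working through the three sub-cases (spider; both centers in $X$; one center in $X$), this parity constraint forces $\sum_i l_i$ to have a parity that contradicts the parity of $n-1$ or $n-2$ when $n$ is odd. Hence $\tau\geq 2$, giving $T(G)\geq 1$.

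\emph{Main obstacle.} The parity contradiction is the short conceptual heart. The effort lies in the bookkeeping of Step~4 --- particularly in the one-center-in-$X$ variant of the double-spider, where the component containing the non-removed center spreads into two of the four arms and forces its own parity constraint on those two arm lengths before the final contradiction can be assembled.
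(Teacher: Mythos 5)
Your proposal is correct and follows essentially the same route as the paper: Lemma 2 plus the degree/component count gives part (i) and the necessity of (ii), and the sufficiency of (ii) reduces in both arguments to the critical case $\omega(G-X)=|X|+2$, which is settled by a parity argument on the arm lengths of the (at most two) degree-3 centers. The only cosmetic difference is that you aggregate the parity constraints over all arms via the identity $\omega(G-X)=1+\sum_{u\in X}(\deg_G(u)-1)-e_G(X)$, whereas the paper localizes the argument to a single even arm and shows $\tau(G-X)\geq 2$ there.
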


\begin{proof}
$(i)$ Suppose that $G$ has the maximum tenacity among all trees on
$n$ vertices and contains a vertex of degree at least 3, say $v_0$.
Let $X_0=\{v_0\}$. Then $X_0$ is a vertex cut of $G$ such that
$G-X_0$ is a forest and $\omega(G-X_0)\geq |X_0|+2$. It follows from
Lemmas 2 and 3 that $T(G)\leq 1<T(P_n)$, a contradiction. This
implies that $G$ is a path. The sufficiency follows immediately.

$(ii)$ Suppose that $G$ has the maximum tenacity among all trees on
$n$ vertices and contains a vertex of degree at least 4, say $v_0$;
or two nonadjacent vertices of degree 3, say $u_0$ and $v_0$. Let
$X_0=\{v_0\}$ or $X_0=\{u_0,v_0\}$. Then $X_0$ is a vertex cut such
that $G-X_0$ is a forest and $\omega(G-X_0)\geq |X_0|+3$. It follows
from Lemmas 2 and 3 that $T(G)<1=T(P_n)$. This completes the proof
of the necessity.

Now let $G$ be a tree on $n$ vertices with maximum degree 3 and
contains no nonadjacent vertices of degree 3. To prove the
sufficiency, we need only show that $G$ has the same tenacity as the
path $P_n$.

\setcounter{case}{0}
\begin{case}
$G$ has only one vertex of degree 3.
\end{case}

Let $u$ be the vertex of degree 3 and $X$ be an arbitrary vertex cut
of $G$. If $u\notin X$, or $u\in X$ and $X$ contains two adjacent
vertices of $G-u$ or a leaf of $G-u$, then we have $\omega(G-X)\leq
|X|+1$, and therefore,
$$
\frac{|X|+\tau(G-X)}{\omega(G-X)}\geq 1.
$$
If $u\in X$ and $X$  contains no adjacent vertices of $G-u$ and
leaves of $G-u$, then we can see that $\omega(G-X)=|X|+2$ and at
least one of the three components of $G-u$ is an even path, say
$P_e$. It is not difficult to see that
$$
\tau(G-X)\geq\tau(P_e-X\cap V(P_e))\geq 2.
$$
Therefore,
$$
\frac{|X|+\tau(G-X)}{\omega(G-X)}\geq
\frac{\omega(G-X)-2+\tau(P_e-X\cap V(P_e))}{\omega(G-X)}\geq 1.
$$
From the above discussion and the definition of tenacity we can see
that $T(G)=T(P_n)$.

\begin{case}
$G$ has exactly two adjacent vertices of degree 3.
\end{case}

Let $u$ and $v$ be the two adjacent vertices of degree 3 and $X$ be
an arbitrary vertex cut of $G$. If $|X\cap \{u,v\}|\leq 1$, then
similar to Case 1, we can prove that
$$
\frac{|X|+\tau(G-X)}{\omega(G-X)}\geq 1.
$$
So now we assume that $|X\cap \{u,v\}|=2$. If $X\setminus \{u,v\}$
contains adjacent vertices of $G$ or a leaf of $G$, then
$\omega(G-X)\leq |X|+1$, and therefore,
$$
\frac{|X|+\tau(G-X)}{\omega(G-X)}\geq 1.
$$
Otherwise,  we have $\omega(G-X)=|X|+2$ and at least one of the
three components of $G-u$ is an even path, say $P_e$. As above, we
have
$$
\tau(G-X)\geq\tau(P_e-X\cap V(P_e))\geq 2.
$$
Therefore,
$$
\frac{|X|+\tau(G-X)}{\omega(G-X)}\geq
\frac{\omega(G-X)-2+\tau(P_e-X\cap V(P_e))}{\omega(G-X)}\geq 1.
$$
From the above discussion and the definition of tenacity we can see
that $T(G)=T(P_n)$.

The proof is complete.
\end{proof}

\begin{cor}
Among all the trees on $n$ vertices, the maximum tenacity is
$$
\left\{
         \begin{array}{ll}
           1, & \hbox{if $n$ is odd;} \\
           \frac{n+2}{n}, & \hbox{if $n$ is even.}
         \end{array}
       \right.
$$
\end{cor}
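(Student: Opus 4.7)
The statement is essentially a numerical repackaging of Theorem 2 combined with Lemma 3, so my plan is to split into the two parities and in each case identify an explicit extremal tree whose tenacity is known.

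For the case when $n$ is even, Theorem 2(i) tells us that the unique tree on $n$ vertices achieving the maximum tenacity is the path $P_n$. Hence the maximum tenacity equals $T(P_n)$, and Lemma 3 evaluates this to $(n+2)/n$. This half of the proof is a one‑line citation of the two preceding results.

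For the case when $n$ is odd, Theorem 2(ii) characterizes the extremal trees as those having maximum degree at most 3 and no two nonadjacent vertices of degree 3. The path $P_n$ trivially satisfies this characterization (it has maximum degree 2 and hence no vertex of degree 3 at all), so $P_n$ is extremal. Therefore the maximum tenacity equals $T(P_n)$, which Lemma 3 evaluates to $1$ for odd $n$.

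There is no genuine obstacle: the work was already done inside the proof of Theorem 2, where it was shown that every tree with maximum degree 3 and no two nonadjacent degree‑3 vertices satisfies $T(G)=T(P_n)$, so all extremal trees share a common tenacity value. The only point worth double‑checking when writing the corollary is that one exhibits $P_n$ as a concrete extremal example in each parity so that the value $T(P_n)$ furnished by Lemma 3 is indeed attained, which takes care of both the upper bound (from Theorem 2) and the lower bound (from the existence of $P_n$) simultaneously.
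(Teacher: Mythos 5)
Your proof is correct and matches the derivation the paper intends: the corollary is stated without proof precisely because it follows from Theorem 2 (which identifies the extremal trees and, in its sufficiency part, shows they all have tenacity $T(P_n)$) together with Lemma 3's evaluation of $T(P_n)$ in each parity. Your observation that $P_n$ itself lies in the extremal class for odd $n$ (having maximum degree $2$) correctly supplies the attainment of the value $1$.
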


For unicyclic graphs, we have the following

\begin{theorem}
Among all the unicyclic graphs on $n$ vertices, $G$ has the maximum
tenacity if and only if\\
$(i)$ $G$ is a cycle when $n$ is odd;
\\
$(ii)$ the maximum degree of $G$ is at most 3 and it contains no
nonadjacent vertices of degree 3 when $n$ is even.
\end{theorem}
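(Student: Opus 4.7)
The plan is to mirror the proof of Theorem 2, replacing trees by unicyclic graphs and paths by cycles. The key observation is that $T(C_n) > 1$ for both parities (by Lemma 4), so whenever Lemma 2 yields $T(G) \leq 1$ we automatically get $T(G) < T(C_n)$.

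For necessity, let $G$ be a unicyclic graph on $n$ vertices violating the stated condition. Most configurations are handled directly by Lemma 2: if $G$ has a vertex $v_0$ of degree $\geq 4$ on $C_G$, take $X_0 = \{v_0\}$ and obtain $\omega(G-X_0) = \deg_G(v_0) - 1 \geq 3 = |X_0|+2$; if such a $v_0$ lies off $C_G$, augment $X_0$ by any cycle vertex; and if $G$ has two non-adjacent vertices $u_0, v_0$ of degree $\geq 3$, take $X_0 = \{u_0, v_0\}$ (augmented by a cycle vertex if neither lies on $C_G$). Each yields $\omega(G-X_0) \geq |X_0| + 2$, so $T(G) \leq 1 < T(C_n)$ by Lemma 2. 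This settles the necessity of (ii) entirely and all of (i) except the residual case in which $G$ has maximum degree 3, every degree-3 vertex lies on $C_G$, and these vertices are pairwise adjacent; equivalently, $G = C_c$ together with $s \in \{1,2,3\}$ pendant paths of lengths $\ell_1, \ldots, \ell_s$ at $s$ consecutive cycle vertices (with $s = 3$ possible only when $c = 3$).

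For this residual case I would exhibit a vertex cut adapted to the structure: include one attachment vertex $v_1 \in U_G$ in $X$ and, for each other attachment $v_i$, include either $v_i$ itself or its first pendant-neighbour $u_i^1$ (the choice depending on $\ell_i$), then chop each remaining path component (the cycle arc and each surviving pendant or tail) into pieces of size at most 2 by removing every third vertex. For a path on $m$ vertices this costs $\lceil (m-2)/3 \rceil$ chopping vertices and yields $\lceil (m+1)/3 \rceil$ pieces. Counting then gives a ratio of the form $1 + 1/(K + s + 1)$, where $K$ is the total number of chopping vertices, and using $K \geq (n - 3s - 2)/3$ one checks $K + s + 1 \geq (n+1)/3$; the bound $1 + 3/(n+1) < 1 + 4/(n-1) = T(C_n)$ is then easily verified for every admissible $n$.

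Sufficiency of (i) is immediate from Lemma 4. For (ii), I would show $T(G) = (n+2)/n$ by establishing both bounds: the upper bound comes from the alternating cut on $C_G$ used in Lemma 4, and the lower bound by case analysis on an arbitrary cut $X$, subdivided by how many of the (at most three) degree-3 vertices of $G$ lie in $X$ and whether $X$ breaks the cycle, in close parallel with Cases 1 and 2 of the sufficiency in Theorem 2 (ii). The main obstacle will be the residual construction in necessity (i): the choice between including $v_i$ and $u_i^1$ must be calibrated to $\ell_i$ (in particular, when $\ell_i = 1$ including $u_i^1$ empties the tail), and the counting has to be checked in the degenerate subcase $c = s = 3$; the sufficiency in (ii) is also delicate, since a cut can now interact with both the cycle and the pendant trees, generating subcases beyond those in Theorem 2.
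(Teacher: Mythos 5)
Your Lemma~2 reductions for vertices of degree at least $4$ and for nonadjacent degree-$3$ vertices match the paper's Claims~1 and~2, and the observation that $T(C_n)>1$ for both parities is exactly the right reason these reductions suffice. However, there is a concrete gap in your enumeration of the residual case. You assert that after these reductions every surviving non-cycle graph has \emph{all} its degree-$3$ vertices on $C_G$, so that $G$ is $C_c$ with pendant paths attached at consecutive cycle vertices. This is false: a unicyclic graph consisting of a cycle with one attachment vertex $u\in U_G$ whose off-cycle neighbour $v$ also has degree $3$ (so $v$ carries two further pendant paths) has maximum degree $3$ and its two degree-$3$ vertices are adjacent, hence it survives every one of your reductions, yet it is not of the form you describe and your pendant-path construction does not apply to it. For $n$ odd this configuration still has to be shown to have tenacity below $T(C_n)$, and for $n$ even it must be shown to attain $(n+2)/n$; your sketch covers neither. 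The paper avoids this trap with a single uniform observation: in \emph{every} residual configuration there is a vertex $u_0\in U_G$ such that $G-u_0$ is a disjoint union of two paths (in the configuration above, $v$ drops to degree $2$ once $u_0=u$ is deleted), after which one alternating cut on the two paths gives ratio $\tfrac{n+1}{n-1}$, $1$, or $\tfrac{n+2}{n}$ according to parity, settling both the odd case strictly and the even upper bound at once.

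Beyond that gap, your route is workable but substantially heavier than the paper's in two places. Your ``remove every third vertex'' chopping scheme, with the calibrated choice between $v_i$ and $u_i^1$, requires a delicate count that you leave unverified ($\omega=K+s+1$ and $\tau=2$ are not automatic and depend on the $\ell_i$), whereas the paper's two-path decomposition makes the count immediate. For the sufficiency of (ii) you propose a direct case analysis over all cuts $X$; the paper instead invokes the monotonicity of tenacity under spanning subgraphs (Lemma~1): every admissible $G$ with $n$ even contains either $P_n$ or a spider with one degree-$3$ vertex as a spanning subgraph, so $T(G)\ge \tfrac{n+2}{n}$ follows from Lemma~3 and Theorem~2 with no new cut analysis. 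You should either adopt that spanning-subgraph argument or be prepared for the ``cut meets both the cycle and a pendant tree'' subcases you yourself flag as delicate.
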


\begin{proof}
Suppose that $G$ has the maximum tenacity among all the unicyclic
graphs on $n$ vertices.

\begin{claim}
The maximum degree of $G$ is at most 3.
\end{claim}

\begin{proof}
If $G$ contains a vertex of degree at least 4, say $u_0$, let
$X_0=\{u_0\}$ if $u_0\in V(C_G)$ and $X_0=\{u_0,v_0\}$ otherwise,
where $v_0$ is a vertex in $U_G$. Then $X_0$ is a vertex cut of $G$
such that $G-X_0$ is a forest and $\omega(G-X_0)\geq |X_0|+2$. It
follows from Lemmas 2 and 4 that $T(G)\leq 1<T(C_n)$, a
contradiction.
\end{proof}

\begin{claim}
$G$ contains no nonadjacent vertices of degree 3.
\end{claim}

\begin{proof}
If $G$ contains nonadjacent vertices of degree 3, then we can choose
two of them, say $u_0$ and $v_0$, such that at least one of them is
in $U_G$. Set $X_0=\{u_0,v_0\}$. Then $X_0$ is a vertex cut of $G$
such that $G-X_0$ is a forest and $\omega(G-X_0)\geq |X_0|+2$. It
follows from Lemmas 2 and 4 that $T(G)\leq 1<T(C_n)$, a
contradiction.
\end{proof}

Now suppose that $G$ is not a cycle. Then by Claims 1 and 2, there
exists a vertex $u_0\in U_G$ such that $G-u_0$ is the disjoint union
of two paths $P$ and $Q$. Assume that $P=u_1u_2\cdots u_l$ and
$Q=v_1v_2\cdots v_m$. Let
$$
X^*=\left\{\begin{array}{ll}
             \{u_0,u_2,u_4,\ldots,v_{l-2},v_2,v_4,\ldots,v_{m-2}\}, & \mbox{if both $l$ and $m$ are even;} \\
             \{u_0,u_2,u_4,\ldots,v_{l-1},v_2,v_4,\ldots,v_{m-1}\}, & \mbox{if both $l$ and $m$ are odd;} \\
             \{u_0,u_2,u_4,\ldots,v_{l-2},v_2,v_4,\ldots,v_{m-1}\}, & \mbox{if $l$ is even and $m$ is
             odd;}\\
             \{u_0,u_2,u_4,\ldots,v_{l-1},v_2,v_4,\ldots,v_{m-2}\}, & \mbox{if $l$ is odd and $m$ is even.}
           \end{array}
           \right.
$$
Then $X^*$ is a vertex cut of $G$ with
$$
\left\{\begin{array}{ll}
             \mbox{$|X^*|=\frac{l+m}{2}-1$, $\tau(G-X^*)=2$, and $\omega(G-X^*)=\frac{l+m}{2}$,}& \mbox{if both $l$ and $m$ are even;} \\
             \mbox{$|X^*|=\frac{l+m}{2}$, $\tau(G-X^*)=1$, and $\omega(G-X^*)=\frac{l+m}{2}+1$,} & \mbox{if both $l$ and $m$ are odd;} \\
            \mbox{$|X^*|=\frac{l+m-1}{2}$, $\tau(G-X^*)=2$, and $\omega(G-X^*)=\frac{l+m+1}{2}$,} & \mbox{if $l+m$ is odd.}
           \end{array}
           \right.
$$
Thus,
$$
\frac{|X^*|+\tau(G-X^*)}{\omega(G-X^*)}=\left\{\begin{array}{ll}
             \frac{l+m+2}{l+m}=\frac{n+1}{n-1},& \mbox{if both $l$ and $m$ are even;} \\
             1, & \mbox{if both $l$ and $m$ are odd;} \\
             \frac{l+m+3}{l+m+1}=\frac{n+2}{n}, & \mbox{if $l+m$ is odd.}
           \end{array}
           \right.
$$
It follows from the definition of tenacity and Lemma 4 that
$$
T(G)\leq
\frac{|X^*|+\tau(G-X^*)}{\omega(G-X^*)}\left\{\begin{array}{ll}
             <T(C_n)=\frac{n+3}{n-1}, & \mbox{if $n$ is odd;} \\
             = T(C_n)=\frac{n+2}{n}, & \mbox{if $n$ is even.}
           \end{array}
           \right.
$$
This completes the proof of $(i)$ and the necessity of $(ii)$.

Suppose now that $G$ is a unicyclic graph on $n$ vertices with
maximum degree 3 and contains no nonadjacent vertices of degree 3.
To prove the sufficiency of $(ii)$, we need only show that $G$ has
the same tenacity as the cycle $C_n$.

If $G$ has only one vertex of degree 3; or exactly two adjacent
vertices of degree 3, both on $C_G$, then it is easy to see that
$P_n$ is a spanning subgraph of $G$. It follows from Lemmas 1, 3 and
4 that
$$
T(G)\geq T(P_n)=\frac{n+2}{n}=T(C_n).
$$
If $G$ has exactly two adjacent vertices of degree 3, one is on
$C_G$, the other is not; or has exactly three pairwise adjacent
vertices of degree 3, all on $C_G$, then it is easy to see that $G$
contains a spanning subgraph $H$ with maximum degree 3 and exactly
one vertex of degree 3.  It follows from Lemma 1, Theorem 2 and
Lemma 4 that
$$
T(G)\geq T(H)=\frac{n+2}{n}=T(C_n).
$$

The proof is now complete.
\end{proof}

\begin{cor}
Among all the unicyclic graphs on $n$ vertices, the maximum tenacity
is
$$
\left\{
         \begin{array}{ll}
           \frac{n+3}{n-1}, & \hbox{if $n$ is odd;} \\
           \frac{n+2}{n}, & \hbox{if $n$ is even.}
         \end{array}
       \right.
$$
\end{cor}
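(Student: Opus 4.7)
The corollary is an immediate readout of Theorem 3 combined with Lemma 4, so the plan is short and the main work has already been done upstream. My proposal is to proceed in two cases according to the parity of $n$.

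First, for $n$ odd, I would invoke Theorem 3$(i)$: the unique extremal unicyclic graph is the cycle $C_n$. Thus the maximum tenacity over this class equals $T(C_n)$, which by Lemma 4 is $\frac{n+3}{n-1}$. This case requires only a single substitution.

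Second, for $n$ even, I would invoke Theorem 3$(ii)$: the extremal graphs are precisely the unicyclic graphs on $n$ vertices of maximum degree at most $3$ containing no pair of nonadjacent degree-$3$ vertices. The key observation is that $C_n$ itself belongs to this class (it has maximum degree $2$), so the extremal tenacity equals $T(C_n)$, which by Lemma 4 is $\frac{n+2}{n}$. Alternatively, one can read off the conclusion from the sufficiency portion of Theorem 3's proof, where it is already established that any such extremal $G$ satisfies $T(G) \geq T(C_n) = \frac{n+2}{n}$, combined with the necessity portion giving the matching upper bound.

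There is no genuine obstacle to overcome here, since the characterization theorem has already been proved and Lemma 4 supplies the numerical values. The only small point worth verifying explicitly is that $C_n$ qualifies as an extremal graph in the $n$-even case (which is trivial from its degree sequence), so that one may identify the supremum of $T$ over the extremal class with $T(C_n)$ rather than having to recompute it from the characterization.
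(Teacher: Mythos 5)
Your proposal is correct and matches the paper's (implicit) derivation: the corollary is stated without a separate proof precisely because it follows immediately from Theorem 3 together with Lemma 4, exactly as you describe. Your extra remark that $C_n$ itself lies in the extremal class for even $n$ is the right small check, and it is also what the sufficiency part of Theorem 3's proof already establishes by showing every graph in that class has tenacity equal to $T(C_n)$.
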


\section{Extreme values of edge vulnerability parameters}

As we noted in Section 1, besides the vertex vulnerability
parameters, the edge-analogues of some of them have also been
proposed, e.g., edge-toughness \cite{Gusfield} for toughness,
edge-integrity \cite{Barefoot_Entringer_Swart_0} for integrity,
edge-tenacity \cite{Piazzal_Robert_Stueckle} for tenacity. As for
the vertex vulnerability parameters, it would be an interesting
problem to determine the extreme values of the edge vulnerability
parameters of graphs with a given number of vertices and edges.

\end{document}